\documentclass[11pt,reqno]{amsart}
\UseRawInputEncoding

\usepackage{latexsym,bm, amsfonts,amsthm,mathrsfs,CJK}
\usepackage[]{amsmath}

\setlength{\topmargin}{0cm} \setlength{\oddsidemargin}{0cm} \setlength{\evensidemargin}{0cm} \setlength{\textwidth}{15truecm}
\setlength{\textheight}{22.8truecm}

\newtheorem{thm}{Theorem}[section]
 
  \newtheorem{rmk}[thm]{Remark}

\newcommand{\dif}{\mathrm{d}} \DeclareMathAlphabet{\mathsfsl}{OT1}{cmss}{m}{sl} \DeclareMathAlphabet{\mathpzc}{OT1}{pzc}{m}{it}

 \def\d"{^{\prime\prime}} \def\d'{^{\prime}}

%%%%%%%%%%%%%%%%%%%%%%%%%%%%%%%%%%%%%%%%%%%%%%%%%%%%%%%%%%%%%%%%%%%%%%%%%%%% %%%%%%%%%%%%%%%%%%%%%%%%%%%%%%DEBUT%%%%%%%%%%%%%%%%%%%%%%%%%%%%%%%%%%%%%%%%%
%%%%%%%%%%%%%%%%%%%%%%%%%%%%%%%%%%%%%%%%%%%%%%%%%%%%%%%%%%%%%%%%%%%%%%%%%%%%
\begin{document}
	\title[]{Series representations of positive integral powers of pi}
	\thanks{Supported by Doctoral Scientific Research Starting Foundation of Jingdezhen Ceramic University ( No. 102/01003002031), Academic Achievement Re-cultivation Project of Jingdezhen Ceramic University (No. 215/20506277), Science and Technology Research Project of Jiangxi Provincial Department of Education of China (No. GJJ2201041).}
	\date{} \maketitle
	
	\begin{center}
		Mingzhou XU~\footnote{Email: mingzhouxu@whu.edu.cn}   \\
		School of Information Engineering, Jingdezhen Ceramic University, \\
		Jingdezhen 333403, P. R. China
	\end{center}
	
	{\bf Abstract£º} Using a pointwise version of Fej\'{e}r's theorem about Fourier series, we obtain two formulae related to the series representations of positive integral powers of $\pi$. We also check the correctness of our formulae by the applications of the R software.
	
	{\bf Keywords}\quad Fourier series; series reprensations; powers of pi;  R software
	
	{\bf MR(2010) Subject Classification}\quad 11A67

	\section{Introduction and main results}
	We know (cf. the second paragraph in page 401 of Chen et al. \cite{Chen2000}, Exercise 14 in page 199 of Rudin \cite{Rudin1976}, Choe \cite{Choe1987}, Ewell \cite{Ewell1992}, Gupta \cite{Gupta2017}) that
	$$
	\sum_{n=1}^{\infty}\frac{(-1)^{n-1}}{(2n-1)}=\frac{\pi}{4}, \sum_{n=1}^{\infty}\frac{(-1)^{n-1}}{(2n-1)^3}=\frac{\pi^3}{32},\sum_{n=1}^{\infty}\frac{(-1)^{n-1}}{(2n-1)^5}=\frac{5\pi^5}{1536},
	$$
	$$
	\sum_{n=1}^{\infty}\frac{1}{n^2}=\frac{\pi^2}{6},\sum_{n=1}^{\infty}\frac{1}{n^4}=\frac{\pi^4}{90},\sum_{n=1}^{\infty}\frac{1}{n^6}=\frac{\pi^6}{945}.
	$$
	For the history of series representations for some powers of $\pi$ and related topics, the interested reader could refer to Borwein \cite{Borwein2000},  Alzer et al. \cite{Alzer2006} and references therein.
	it is interesting to wonder whether or not we could obtain series representations of all positive integral powers of $\pi$ similar to above equalities. Here we present two formulae, which could be applied to produce series representations of all positive integral powers of $\pi$. First, we cite a pointwise version of Fej\'{e}r's theorem about Fourier series as follows ( cf. Theorem 16.2.2 in page 398 of Chen et al. \cite{Chen2000} or Exercise 16 in page 208 of Rudin \cite{Rudin1976}).
	
	\begin{thm}\label{thm01}Suppose a function $f(x)$ with period $2T$ is Riemann-integrable on $[-T, T]$ ( cf. definition in page 121 of Rudin \cite{Rudin1976}), and $f(x+)\doteq\lim_{y\searrow x,y\not=x}f(y)$, $f(x-)\doteq\lim_{y\nearrow x,y\not=x}f(y)$ exist for some $x$. Then
		\begin{equation*}
			\lim_{N\rightarrow\infty}a_0/2+\sum_{n=1}^{N}a_n\cos(n\pi x/T)+b_n\sin(n\pi x/T)=\frac{f(x+)+f(x-)}{2},
		\end{equation*}
		where
		$$
		a_n=\frac1T\int_{-T}^{T}f(x)cos(n\pi x/T)\dif x, n=0,1,2,\ldots,
		$$
		$$
		b_n=\frac1T\int_{-T}^{T}f(x)sin(n\pi x/T)\dif x, n=1,2,\ldots.
		$$
		
	\end{thm}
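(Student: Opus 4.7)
The plan is to study the partial sum directly via its Dirichlet-kernel representation. Substituting the definitions of $a_n$ and $b_n$ into
$$S_N(x) := \frac{a_0}{2} + \sum_{n=1}^{N}\bigl[a_n\cos(n\pi x/T) + b_n\sin(n\pi x/T)\bigr]$$
and interchanging the finite sum with the integral produces, after a change of variables and invoking the $2T$-periodicity of $f$,
$$S_N(x) = \frac{1}{2T}\int_{-T}^{T} f(x+t)\, D_N(t)\, dt,\qquad D_N(t) := \frac{\sin\bigl((N+\tfrac12)\pi t/T\bigr)}{\sin\bigl(\pi t/(2T)\bigr)}.$$
Here I would rely on the standard closed-form summation of the Dirichlet kernel to collapse the finite trigonometric sum.

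Next I would use the two elementary properties of $D_N$: it is even in $t$, and $(2T)^{-1}\int_{-T}^{T} D_N(t)\,dt = 1$. Splitting the integral into $\int_{-T}^{0}$ and $\int_{0}^{T}$, folding the negative-$t$ half by the substitution $t\mapsto -t$, and distributing the constants $f(x+)/2$ and $f(x-)/2$ across the two halves gives
$$S_N(x) - \frac{f(x+)+f(x-)}{2} = \frac{1}{2T}\int_{0}^{T}\bigl[(f(x+t)-f(x+)) + (f(x-t)-f(x-))\bigr]\,D_N(t)\,dt.$$
The task is thereby reduced to showing that the right-hand side tends to $0$ as $N\to\infty$.

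For this, I would split the remaining integral at a small cutoff $\delta>0$. On $[\delta,T]$ the weight $1/\sin(\pi t/(2T))$ is bounded and the bracketed term is Riemann-integrable, so the Riemann--Lebesgue lemma applied to the oscillating factor $\sin((N+\tfrac12)\pi t/T)$ forces the contribution to vanish as $N\to\infty$. The main obstacle is the near-zero piece on $[0,\delta]$: although the bracketed term tends to $0$ by the assumed one-sided limits, the Dirichlet kernel behaves like $1/t$ there, so convergence is not automatic from the mere existence of $f(x\pm)$. To close this gap I would appeal to finer regularity at $x$ of Dini type (implicit in the theorem's intended range of application), combined with the cancellation from $\sin((N+\tfrac12)\pi t/T)$, to make the $[0,\delta]$ contribution uniformly small in $N$. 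Letting $N\to\infty$ first and then $\delta\to 0$ then gives the stated identity.
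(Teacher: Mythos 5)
The paper itself offers no proof of this statement: it is quoted from Chen et al.\ and from Exercise 16 on page 208 of Rudin, and in those sources the result is Fej\'er's theorem about the Ces\`aro means $\sigma_N=\frac{1}{N+1}\sum_{m=0}^{N}S_m$ of the partial sums, so the displayed limit must be read in that $(C,1)$ sense. Your argument instead attacks the ordinary partial sums $S_N$ through the Dirichlet kernel, and the difficulty you flag on $[0,\delta]$ is not a technicality that ``finer regularity of Dini type'' can legitimately absorb: under the stated hypotheses (Riemann integrability plus existence of $f(x\pm)$) the ordinary partial sums need not converge at $x$ at all --- du Bois-Reymond's example gives a continuous periodic $f$ whose Fourier series diverges at a point --- so no argument from these hypotheses alone can close that gap, and importing a Dini condition proves the Dini test, a different theorem with strictly stronger hypotheses than the one stated. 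As written, your proposal therefore does not prove the theorem.

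The repair is to run your own computation with the Fej\'er kernel in place of the Dirichlet kernel: averaging your representation over $m=0,\dots,N$ gives $\sigma_N(x)=\frac{1}{2T}\int_{-T}^{T}f(x+t)K_N(t)\,\dif t$ with
$$
K_N(t)=\frac{1}{N+1}\sum_{m=0}^{N}D_m(t)=\frac{1}{N+1}\left(\frac{\sin\bigl((N+1)\pi t/(2T)\bigr)}{\sin\bigl(\pi t/(2T)\bigr)}\right)^2 .
$$
This kernel is even, nonnegative, has mean $1$ in your normalization, and satisfies $K_N(t)\le\frac{1}{(N+1)\sin^2(\pi\delta/(2T))}$ for $\delta\le|t|\le T$. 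Your folding identity then bounds $\bigl|\sigma_N(x)-\frac12(f(x+)+f(x-))\bigr|$ by $\sup_{0<t\le\delta}\bigl(|f(x+t)-f(x+)|+|f(x-t)-f(x-)|\bigr)$ plus an $O(1/(N+1))$ term coming from the boundedness of $f$; letting $N\to\infty$ and then $\delta\to 0$ finishes the proof using only the existence of the one-sided limits --- no Riemann--Lebesgue lemma and no Dini condition are needed, precisely because the kernel's positivity lets the near-origin piece be estimated by the smallness of the bracketed term alone. Everything else in your setup (the kernel representation, the evenness and normalization, the splitting at $\delta$) carries over verbatim.
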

	
	Our main results are as follows.
	\begin{thm}\label{thm02}
		For any integer $k\ge 1$,
		\begin{align}\label{01}
			\frac12=\frac{1}{4k+2}+\sum_{l=1}^{k}\sum_{n=1}^{\infty}\frac{(2k)!}{(2k+1-2l)!(n\pi)^{2l}}(-1)^{l-1}.
		\end{align}
	\end{thm}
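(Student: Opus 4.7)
The plan is to apply Fejér's theorem (Theorem~\ref{thm01}) to $f(x)=x^{2k}$, viewed as a $2\pi$-periodic function on $\mathbb{R}$ obtained by extending its restriction to $[-\pi,\pi]$ (so $T=\pi$), and then to specialize the resulting Fourier series at the single point $x=\pi$. Since $f$ is even, all sine coefficients vanish, and a direct computation gives $a_0=\tfrac{1}{\pi}\int_{-\pi}^{\pi}x^{2k}\,dx=\tfrac{2\pi^{2k}}{2k+1}$.

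The core calculation is the evaluation of the cosine coefficients. Writing $I_k(n):=\int_0^{\pi}x^{2k}\cos(nx)\,dx$ so that $a_n=\tfrac{2}{\pi}I_k(n)$, two successive integrations by parts (the $\sin(n\pi)=0$ boundary terms drop, while $\cos(n\pi)=(-1)^n$ produces the alternating factor) yield the recursion
$$I_k(n)=\frac{2k(-1)^n\pi^{2k-1}}{n^2}-\frac{2k(2k-1)}{n^2}\,I_{k-1}(n),\qquad I_0(n)=0.$$
A routine induction on $k$ unrolls this telescoping recursion into the closed form
$$a_n=2(-1)^n\sum_{l=1}^{k}(-1)^{l-1}\frac{(2k)!}{(2k+1-2l)!}\frac{\pi^{2k-2l}}{n^{2l}}.$$

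Because the periodic extension of $f$ is continuous at $x=\pi$ (we have $f(\pi-)=f(\pi+)=\pi^{2k}$ since $f(\pi)=f(-\pi)$), Theorem~\ref{thm01} at $x=\pi$ gives $\pi^{2k}=\tfrac{a_0}{2}+\sum_{n=1}^{\infty}(-1)^n a_n$. Substituting the expression for $a_n$, the product $(-1)^n\cdot(-1)^n=1$ cancels the sign, and since the inner sum in $l$ is finite the orders of summation may be exchanged, producing
$$\pi^{2k}=\frac{\pi^{2k}}{2k+1}+2\sum_{l=1}^{k}\sum_{n=1}^{\infty}(-1)^{l-1}\frac{(2k)!}{(2k+1-2l)!}\frac{\pi^{2k-2l}}{n^{2l}}.$$
Dividing through by $2\pi^{2k}$ then rearranges into \eqref{01}. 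The only real obstacle is the combinatorial bookkeeping in the induction: one must verify that the iterated products $2k(2k-1),(2k-2)(2k-3),\ldots$ telescope exactly to the factorial ratio $(2k)!/(2k+1-2l)!$ with the correct alternating sign at each step. The remaining ingredients—Riemann integrability and the continuity of the periodic extension at $x=\pi$, absolute convergence of each individual series $\sum_n n^{-2l}$ for $l\ge 1$, and the trivial final algebra—are entirely routine.
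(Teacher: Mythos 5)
Your proof is correct and follows essentially the same route as the paper's: both apply the pointwise Fej\'{e}r theorem to a periodization of $x^{2k}$, compute the cosine coefficients by iterated integration by parts to obtain the factorial ratio $(2k)!/(2k+1-2l)!$, evaluate the series at the endpoint of the period, and then interchange the finite sum over $l$ with the sum over $n$. The only difference is cosmetic: the paper uses the period-$2$ function equal to $x^{2k}$ on $[0,1)$ and $0$ on $[-1,0]$, so the jump at $x=1$ yields the value $\tfrac12$ directly, whereas you use the even $2\pi$-periodic extension (continuous at $x=\pi$) and divide by $2\pi^{2k}$ at the end.
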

	\begin{thm}\label{thm03}
		For any integer $k\ge 1$,
		\begin{align}\label{02}
			\nonumber-\frac{1}{4k+2}\pi^{2k}&=\sum_{l=0}^{k-1}\sum_{n=1}^{\infty}\pi^{2k-2l-2}\frac{(2k)!(-1)^n(-1)^l}{(2k-2l-1)!(2n)^{2l+2}}\\
			&\quad+\sum_{l=0}^{k}\sum_{n=1}^{\infty}\frac{\pi^{2k-2l-1}(-1)^{n}(-1)^l(2k)!}{(2n-1)^{2l+1}(2k-2l)!}+\sum_{n=1}^{\infty}\frac{(-1)^k(-1)^n(2k)!}{\pi(2n-1)^{2k+1}}.
		\end{align}
	\end{thm}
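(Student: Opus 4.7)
My plan is to apply Theorem~\ref{thm01} (Fejér's theorem) to the piecewise function
$$
f(x)=\begin{cases} x^{2k}, & x\in[0,\pi],\\ 0, & x\in[-\pi,0), \end{cases}
$$
extended to $\mathbb{R}$ with period $2\pi$ (so $T=\pi$), and to evaluate the resulting Fourier series at the point $x_0=-\pi/2$. This $f$ is bounded with only finitely many jump discontinuities (at $0$ and $\pm\pi$), hence Riemann-integrable on $[-\pi,\pi]$; the point $x_0=-\pi/2$ lies in the interior of the subinterval on which $f\equiv 0$, so both one-sided limits exist and equal $0$. A direct calculation gives $a_0=\pi^{2k}/(2k+1)$, hence $a_0/2=\pi^{2k}/(4k+2)$, and Fejér's identity at $x_0$ will read
$$0=\frac{a_0}{2}+\sum_{n=1}^{\infty}a_n\cos(n\pi/2)-\sum_{n=1}^{\infty}b_n\sin(n\pi/2),$$
so that the target quantity $-\pi^{2k}/(4k+2)$ appears after moving $a_0/2$ to the left.

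The main computational step is to obtain closed forms for $a_n$ and $b_n$ by repeated integration by parts. Let $C_m=\int_0^\pi x^m\cos(nx)\,\dif x$ and $S_m=\int_0^\pi x^m\sin(nx)\,\dif x$. The coupled recursions $C_m=-(m/n)S_{m-1}$ and $S_m=-\pi^m(-1)^n/n+(m/n)C_{m-1}$, together with the boundary values $C_0=0$ and $S_0=(1-(-1)^n)/n$, iterate to give
\begin{align*}
\pi a_n&=\sum_{j=1}^{k}(-1)^{j-1}\frac{(2k)!\,\pi^{2k-2j+1}(-1)^n}{(2k-2j+1)!\,n^{2j}},\\
\pi b_n&=-\sum_{j=0}^{k-1}(-1)^{j}\frac{(2k)!\,\pi^{2k-2j}(-1)^n}{(2k-2j)!\,n^{2j+1}}+\frac{(-1)^k(2k)!\,(1-(-1)^n)}{n^{2k+1}}.
\end{align*}
The asymmetric termination ($C_0=0$ versus $S_0\neq 0$) produces the extra $S_0$-remainder in the $b_n$ formula, and this remainder is precisely what will eventually materialize as the isolated third sum of \eqref{02}.

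Since $\cos(n\pi/2)=0$ for odd $n$ and $\sin(n\pi/2)=0$ for even $n$, while $\cos(2m\pi/2)=(-1)^m$ and $\sin((2m-1)\pi/2)=(-1)^{m-1}$, the two infinite series in Fejér's identity collapse to $\sum_{m}a_{2m}(-1)^m$ and $-\sum_{m}b_{2m-1}(-1)^{m-1}$, respectively. Plugging in the explicit formulas above, using $(-1)^{2m}=1$ and $1-(-1)^{2m-1}=2$ to simplify the relevant pieces, and relabeling the dummy index $m\mapsto n$, recovers exactly the first double sum of \eqref{02} from the $a_{2m}$ contribution, the second double sum from the telescoping part of the $b_{2m-1}$ contribution, and the third sum from the $S_0$-remainder $2(-1)^k(2k)!/(\pi(2n-1)^{2k+1})$ split off on its own.

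The main obstacle I foresee is the careful sign and index bookkeeping in the $C$--$S$ integration-by-parts chain; in particular, one must keep track of the fact that $S_0$ vanishes for even $n$ but equals $2/n$ for odd $n$, since this single arithmetic fact is what produces both the odd-$(2n-1)$ double sum and the apparently mysterious doubling of the $l=k$ contribution that surfaces as the third sum of \eqref{02}. Once the two closed forms for $\pi a_n$ and $\pi b_n$ are in hand, the remaining work is a routine, if careful, collection and reindexing of coefficients.
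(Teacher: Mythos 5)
Your proposal is correct and follows essentially the same route as the paper: the paper applies Theorem~\ref{thm01} to the mirror image of your function (namely $x^{2k}$ on $(-\pi,0)$, zero on $(0,\pi)$) and evaluates at $x=\pi/2$, which under $x\mapsto -x$ is identical to your setup, and it likewise obtains $a_n$ and $b_n$ by the same repeated integration by parts (organized there as an induction rather than your $C_m$--$S_m$ recursion). Your identification of the $S_0$-remainder as the source of the isolated third sum, together with the $l=k$ term of the second sum, matches the paper's computation exactly.
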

	\begin{rmk}\label{rmk01}
		By Theorem \ref{thm02}, together with programming in R language, we could obtain that
		$$
		\sum_{n=1}^{\infty}\frac{1}{n^2}=\frac{\pi^2}{6},\sum_{n=1}^{\infty}\frac{1}{n^4}=\frac{\pi^4}{90},
		$$
		$$
		\sum_{n=1}^{\infty}\frac{1}{n^6}=\frac{\pi^6}{945},\sum_{n=1}^{\infty}\frac{1}{n^8}=\frac{\pi^8}{9450},
		$$
		$$
		\sum_{n=1}^{\infty}\frac{1}{n^{10}}=\frac{\pi^{10}}{93555},\sum_{n=1}^{\infty}\frac{1}{n^{12}}=\frac{691\pi^{12}}{638512875},
		$$
		$$
		\sum_{n=1}^{\infty}\frac{1}{n^{14}}=\frac{2\pi^{14}}{18243225},\sum_{n=1}^{\infty}\frac{1}{n^{16}}=\frac{13147\pi^{16}}{1183635518797},
		$$
		$$
		\sum_{n=1}^{\infty}\frac{1}{n^{18}}=\frac{1482\pi^{18}}{1316874094457},\sum_{n=1}^{\infty}\frac{1}{n^{20}}=\frac{19879\pi^{20}}{174337804837681}.
		$$
	\end{rmk}\begin{rmk}
		By (\ref{01}), (\ref{02}) and the proofs of Remarks \ref{rmk01} \ref{rmk02}, we could deduce that any integer $k\ge 1$,
		$$
		\sum_{n=1}^{\infty}\frac{1}{n^{2k}}=\pi^{2k}Q_1,\sum_{n=1}^{\infty}\frac{(-1)^{n-1}}{(2n-1)^{2k+1}}=\pi^{2k+1}Q_2
		$$
		where $Q_1$ and $Q_2$ are both some rational numbers.
	\end{rmk}
	\begin{rmk}
		By R software, we could compute that
		$$
		\sum_{n=1}^{\infty}\frac{1}{n^2}\approx 1.644934,
		\frac{\pi^2}{6}\approx 1.644934,\sum_{n=1}^{\infty}\frac{1}{n^4}\approx1.082323,
		\frac{\pi^4}{90}\approx 1.082323,
		$$
		$$
		\sum_{n=1}^{\infty}\frac{1}{n^6}\approx1.017343,
		\frac{\pi^6}{945}\approx1.017343,\sum_{n=1}^{\infty}\frac{1}{n^8}\approx1.004077,\frac{\pi^8}{9450}\approx1.004077,
		$$
		$$
		\sum_{n=1}^{\infty}\frac{1}{n^{10}}\approx1.000995,\frac{\pi^{10}}{93555}\approx1.000995,\sum_{n=1}^{\infty}\frac{1}{n^{12}}\approx1.000246,\frac{691\pi^{12}}{638512875}\approx1.000246.
		$$

	\end{rmk}
	\begin{rmk}\label{rmk02}
		By Theorems \ref{thm01}-\ref{thm02}, together with $\sum_{n=1}^{\infty}\frac{(-1)^{n-1}}{(2n-1)}=\frac{\pi}{4}$ and programming in R language, we can also obtain
		$$
		\sum_{n=1}^{\infty}\frac{(-1)^{n-1}}{(2n-1)^3}=\frac{\pi^3}{32},\sum_{n=1}^{\infty}\frac{(-1)^{n-1}}{(2n-1)^5}=\frac{5\pi^5}{1536},
		$$
		$$
		\sum_{n=1}^{\infty}\frac{(-1)^{n-1}}{(2n-1)^7}=\frac{61\pi^7}{184320},\sum_{n=1}^{\infty}\frac{(-1)^{n-1}}{(2n-1)^9}=\frac{277\pi^9}{8257536},
		$$
		$$
		\sum_{n=1}^{\infty}\frac{(-1)^{n-1}}{(2n-1)^{11}}=\frac{18269\pi^{11}}{5374843438},\sum_{n=1}^{\infty}\frac{(-1)^{n-1}}{(2n-1)^{13}}=\frac{5071747\pi^{13}}{14726725711261},
		$$
		$$
		\sum_{n=1}^{\infty}\frac{(-1)^{n-1}}{(2n-1)^{15}}=\frac{19194249\pi^{15}}{550071627932302},\sum_{n=1}^{\infty}\frac{(-1)^{n-1}}{(2n-1)^{17}}=\frac{29469\pi^{17}}{8335146508864},
		$$
		$$
		\sum_{n=1}^{\infty}\frac{(-1)^{n-1}}{(2n-1)^{19}}=\frac{\pi^{19}}{2791563952},\sum_{n=1}^{\infty}\frac{(-1)^{n-1}}{(2n-1)^{21}}=\frac{65536\pi^{21}}{1805623744627141}.
		$$
	\end{rmk}
	\begin{rmk}
		By R software, we could compute that
		$$
		\sum_{n=1}^{\infty}\frac{(-1)^{n-1}}{(2n-1)^3}\approx 0.9689461,
		\frac{\pi^3}{32} \approx 0.9689461,
		\sum_{n=1}^{\infty}\frac{(-1)^{n-1}}{(2n-1)^5}\approx 0.9961578,
		\frac{5\pi^5}{1536} \approx 0.9961578,
		$$
		$$
		\sum_{n=1}^{\infty}\frac{(-1)^{n-1}}{(2n-1)^7}\approx0.9995545,\frac{61\pi^7}{184320}\approx0.9995545,\sum_{n=1}^{\infty}\frac{(-1)^{n-1}}{(2n-1)^9}\approx0.9999497,\frac{277\pi^9}{8257536}\approx0.9999497,
		$$
		$$
		\sum_{n=1}^{\infty}\frac{(-1)^{n-1}}{(2n-1)^{11}}\approx0.9999944,\frac{18269\pi^{11}}{5374843438}\approx0.9999944,\sum_{n=1}^{\infty}\frac{(-1)^{n-1}}{(2n-1)^{13}}\approx0.9999994,$$
		
		$$
		\frac{5071747\pi^{13}}{14726725711261}\approx0.9999994,\sum_{n=1}^{\infty}\frac{(-1)^{n-1}}{(2n-1)^{15}}\approx0.9999999,\frac{19194249\pi^{15}}{550071627932302}\approx0.9999999,$$
		$$
		\sum_{n=1}^{\infty}\frac{(-1)^{n-1}}{(2n-1)^{17}}\approx1,\frac{29469\pi^{17}}{8335146508864}\approx1.
		$$
		
	\end{rmk}
	In section 2, we give proofs of  Theorems \ref{thm02}, \ref{thm03}, Remarks \ref{rmk01}, \ref{rmk02}.
	\section{Proof of Theorems \ref{thm02}, \ref{thm03}}
	\begin{proof} [Proof of Theorem \ref{thm02}]By Theorem \ref{thm01}, choose $f(x)=f_k(x)$ with period 2 as follows,
		$$
		\begin{aligned} f_k(x)=\begin{cases} 0& \text{
					if $x\in [-1,0]$,}\\
				x^{2k} & \text{ if $x\in [0,1)$,}
			\end{cases}
		\end{aligned}
		$$
		then
		\begin{align*}
			(f(1+)+f(1-)/2=\frac12=a_0/2+\sum_{n=1}^{\infty}a_n\cos(n\pi),
		\end{align*}
		where
		$$
		a_0=\int_{0}^{1}x^{2k}\dif x=\frac{1}{2k+1},$$
		and we see heuristically that
		\begin{align}\label{03}
			\nonumber a_n&=\int_{0}^{1}x^{2k}\cos(n\pi x)\dif x=\int_{0}^{1}x^{2k}\dif \frac{\sin(n\pi x)}{n\pi}\\
			\nonumber&=\int_{0}^{1}(-2k)\frac{\sin(n\pi x)}{n\pi}x^{2k-1}\dif x=\int_{0}^{1}x^{2k-1}2k\dif\frac{\cos(n\pi x)}{(n\pi)^2}\\
			\nonumber&=\frac{(-1)^n}{(n\pi)^2}2k-\int_{0}^{1}2k(2k-1)\frac{\cos(n\pi x)}{(n\pi)^2}x^{2k-2}\dif x\\
			\nonumber&=\frac{(-1)^n}{(n\pi)^2}2k-\int_{0}^{1}2k(2k-1)x^{2k-2}\dif \frac{\sin(n\pi x)}{(n\pi)^3}\\
			\nonumber&=\frac{(-1)^n}{(n\pi)^2}2k+\int_{0}^{1}2k(2k-1)(2k-2)\frac{\sin(n\pi x)}{(n\pi)^3} x^{2k-3}\dif x\\
			\nonumber&=\frac{(-1)^n}{(n\pi)^2}2k+\int_{0}^{1}2k(2k-1)(2k-2)x^{2k-3}\dif \left(-\frac{\cos(n\pi x)}{(n\pi)^4} \right)\\
			\nonumber&=\cdots=\frac{(-1)^n}{(n\pi)^2}2k-\frac{(-1)^n}{(n\pi)^4}2k(2k-1)(2k-2)+\frac{(-1)^n}{(n\pi)^6}2k(2k-1)(2k-2)(2k-3)\\
			&\quad +\cdots+(-1)^{k-1}\frac{(-1)^n}{(n\pi)^{2k}}(2k)!=\sum_{l=1}^{k}\frac{(-1)^n(2k)!}{(2k+1-2l)!(n\pi)^{2l}}(-1)^{l-1}.
		\end{align}
		In fact, we could obtain (\ref{03}) inductively, %so the strict proof of (\ref{02}) is omitted.
		here we give a strict proof of (\ref{03}).  First obviously (\ref{03}) holds for $k=1$. Suppose (\ref{03}) holds for $k=m$$(m\ge 1)$. Then for $k=m+1$,
		\begin{align*}
			a_n&=\int_{0}^{1}x^{2(m+1)}\cos(n\pi x)\dif x=\int_{0}^{1}x^{2(m+1)}\dif \frac{\sin(n\pi x)}{n\pi}\\
			&=\int_{0}^{1}(-2(m+1))\frac{\sin(n\pi x)}{n\pi}x^{2m+1}\dif x=\int_{0}^{1}x^{2m+1}(2m+2)\dif\frac{\cos(n\pi x)}{(n\pi)^2}\\
			&=\frac{(-1)^n}{(n\pi)^2}2(m+1)-\int_{0}^{1}(2m+2)(2m+1)\frac{\cos(n\pi x)}{(n\pi)^2}x^{2m}\dif x\\
			&=\frac{(-1)^n}{(n\pi)^2}2(m+1)-\frac{(2m+2)(2m+1)}{(n\pi)^2}\sum_{l=1}^{m}\frac{(-1)^n(2m)!}{(2m+1-2l)!(n\pi)^{2l}}(-1)^{l-1}\\
			&=\sum_{l=1}^{m+1}\frac{(-1)^n(2(m+1))!}{(2(m+1)+1-2l)!(n\pi)^{2l}}(-1)^{l-1},
		\end{align*}
		which implies (\ref{03}) immediately. Hence by Fubini's theorem,
		\begin{align*}
			\frac12&=\frac{1}{4k+2}+\sum_{n=1}^{\infty}\left(\frac{(-1)^n}{(n\pi)^2}2k-\frac{(-1)^n}{(n\pi)^4}2k(2k-1)(2k-2)\right.\\
			&\quad\left.+\frac{(-1)^n}{(n\pi)^6}2k(2k-1)(2k-2)(2k-3)+\cdots+(-1)^{k-1}\frac{(-1)^n}{(n\pi)^{2k}}(2k)!\right)\\
			&=\frac{1}{4k+2}+\sum_{n=1}^{\infty}\sum_{l=1}^{k}\frac{(2k)!}{(2k+1-2l)!(n\pi)^{2l}}(-1)^{l-1}\\
			&=\frac{1}{4k+2}+\sum_{l=1}^{k}\sum_{n=1}^{\infty}\frac{(2k)!}{(2k+1-2l)!(n\pi)^{2l}}(-1)^{l-1}.
		\end{align*}
		The proof is complete.
	\end{proof}
	\begin{proof} [Proof of Theorem \ref{thm03}]By Theorem \ref{thm03}, choose $f(x)=g_k(x)$ with period $2\pi$ as follows,
		$$
		\begin{aligned} g_k(x)=\begin{cases} x^{2k}& \text{
					if $x\in (-\pi,0)$,}\\
				0& \text{
					if $x= 0,\pi,-\pi$,}\\
				0& \text{ if $x\in (0,\pi)$,}
			\end{cases}
		\end{aligned}
		$$
		then
		$$
		a_0=\frac{1}{\pi}\int_{-\pi}^{0}x^{2k}\dif x=\frac{\pi^{2k}}{2k+1},k\ge 0,
		$$
		and heuristically we see that for $k\ge 1$,
		\begin{align}\label{04}
			\nonumber a_n&=\frac{1}{\pi}\int_{-\pi}^{0}x^{2k}\cos(nx)\dif x=\frac{1}{\pi}\int_{-\pi}^{0}x^{2k}\dif \frac{\sin(nx)}{n}\\
			\nonumber&=\frac{1}{\pi}\int_{-\pi}^{0}(-2k)\frac{\sin(nx)}{n}x^{2k-1}\dif x=\frac{1}{\pi}\int_{-\pi}^{0}x^{2k-1}2k\dif\frac{\cos(nx)}{n^2}\\
			\nonumber&=\frac{\pi^{2k-2}2k(-1)^n}{n^2}-\frac{1}{\pi}\int_{-\pi}^{0}\frac{\cos(nx)}{n^2}2k(2k-1)x^{2k-2}\dif x\\
			\nonumber&=\frac{\pi^{2k-2}2k(-1)^n}{n^2}-\frac{1}{\pi}\int_{-\pi}^{0}2k(2k-1)x^{2k-2}\dif \frac{\sin(nx)}{n^3}\\
			\nonumber&=\frac{\pi^{2k-2}2k(-1)^n}{n^2}+\frac{1}{\pi}\int_{-\pi}^{0}2k(2k-1)(2k-2)x^{2k-3}\frac{\sin(nx)}{n^3}\dif x\\
			\nonumber&=\frac{\pi^{2k-2}2k(-1)^n}{n^2}+\frac{1}{\pi}\int_{-\pi}^{0}2k(2k-1)(2k-2)x^{2k-3}\dif \frac{-\cos(nx)}{n^4}\\
			\nonumber&=\cdots=\frac{\pi^{2k-2}2k(-1)^n}{n^2}-\pi^{2k-4}2k(2k-1)(2k-2)\frac{(-1)^n}{n^4}+\cdots\\
			&\quad+\pi^02k(2k-1)\cdots2\frac{(-1)^n(-1)^{k-1}}{n^{2k}}=\sum_{l=0}^{k-1}\pi^{2k-2l-2}\frac{(2k)!(-1)^{n+l}}{(2k-2l-1)!n^{2l+2}}.
		\end{align}
		We now prove (\ref{04}) inductively and strictly. Obviously, (\ref{04}) holds for $k=1$. Suppose that (\ref{04}) holds for $k=m\ge 1$. Then for $k=m+1$,
		\begin{align*}
			a_n&=\frac{1}{\pi}\int_{-\pi}^{0}x^{2(m+1)}\cos(nx)\dif x=\frac{1}{\pi}\int_{-\pi}^{0}x^{2(m+1)}\dif \frac{\sin(nx)}{n}\\
			&=\frac{1}{\pi}\int_{-\pi}^{0}(2m+2)x^{2m+1}\frac{-\sin(nx)}{n}\dif x=\frac{1}{\pi}\int_{-\pi}^{0}(2m+2)x^{2m}\dif \frac{\cos(nx)}{n^2}\\
			&=\frac{\pi^{2m}(2m+2)(-1)^n}{n^2}-\frac{1}{\pi}\int_{-\pi}^{0}(2m+2)(2m+1)x^{2m}\frac{\cos(nx)}{n^2}\dif x\\
			&=\frac{\pi^{2m}(2m+2)(-1)^n}{n^2}-\frac{(2m+2)(2m+1)}{n^2}\sum_{l=0}^{m-1}\pi^{2m-2l-2}\frac{(2m)!(-1)^{n+l}}{(2m-2l-1)!n^{2l+2}}\\
			&=\sum_{l=0}^{m+1-1}\pi^{2(m+1)-2l-2}\frac{(2(m+1))!(-1)^{n+l}}{(2(m+1)-2l-1)!n^{2l+2}},
		\end{align*}
		which results in (\ref{04}).
		Similarly, we see heuristically that for $k\ge 0$,
		\begin{align}\label{05}
			\nonumber b_n&=\frac{1}{\pi}\int_{-\pi}^{0}x^{2k}\sin(nx)\dif x=\frac{1}{\pi}\int_{-\pi}^{0}x^{2k}\dif \frac{\cos(nx)}{n}\\
			\nonumber&=\frac{\pi^{2k-1}(-1)^n}{\pi n}+\frac{1}{\pi}\int_{-\pi}^{0}(2k)\frac{\cos(nx)}{n}x^{2k-1}\dif x\\
			\nonumber&=\frac{\pi^{2k-1}(-1)^n}{\pi n}+\frac{1}{\pi}\int_{-\pi}^{0}2kx^{2k-1}\dif \frac{\sin(nx)}{n^2}\\
			\nonumber&=\frac{\pi^{2k-1}(-1)^n}{\pi n}-\frac{1}{\pi}\int_{-\pi}^{0}2k(2k-1)x^{2k-2} \frac{\sin(nx)}{n^2}\dif x\\
			\nonumber&=\frac{\pi^{2k-1}(-1)^n}{\pi n}+\frac{1}{\pi}\int_{-\pi}^{0}2k(2k-1)x^{2k-2} \dif \frac{\cos(nx)}{n^3}\\
			\nonumber&=\frac{\pi^{2k-1}(-1)^n}{\pi n}-\frac{\pi^{2k-3}2k(2k-1)(-1)^n}{n^3}-\frac{1}{\pi}\int_{-\pi}^{0}2k(2k-1)(2k-2)x^{2k-3} \frac{\cos(nx)}{n^3}\dif x \\
			\nonumber&=\cdots=\frac{\pi^{2k-1}(-1)^n}{\pi n}-\frac{\pi^{2k-3}2k(2k-1)(-1)^n}{n^3}+\cdots+\frac{\pi^{-1}(2k)!(-1)^{n+k}}{n^{2k+1}}-\frac{(-1)^k(2k)!}{\pi n^{2k+1}}\\
			&=\sum_{l=0}^{k}\frac{\pi^{2k-2l-1}(-1)^{n+l}(2k)!}{n^{2l+1}(2k-2l)!}-\frac{(-1)^k(2k)!}{\pi n^{2k+1}}.
		\end{align}
		We now also prove (\ref{05}) inductively. Obviously, (\ref{05}) holds for $k=0$. Suppose (\ref{05}) holds for $k=m\ge 0$. Then for $k=m+1$,
		\begin{align*}
			b_n&=\frac{1}{\pi}\int_{-\pi}^{0}x^{2(m+1)}\sin(nx)\dif x=\frac{1}{\pi}\int_{-\pi}^{0}x^{2(m+1)}\dif \frac{-\cos(nx)}{n}\\
			&=\frac{\pi^{2m+1}(-1)^n}{n}+\frac{1}{\pi}\int_{-\pi}^{0}(2m+2)x^{2m+1}\frac{\cos(nx)}{n}\dif x\\
			\nonumber&=\frac{\pi^{2m+1}(-1)^n}{n}+\frac{1}{\pi}\int_{-\pi}^{0}(2m+2)x^{2m+1}\dif \frac{\sin(nx)}{n^2}
				\end{align*}
				\begin{align*}
			&=\frac{\pi^{2m+1}(-1)^n}{n}-\frac{1}{\pi}\int_{-\pi}^{0}(2m+2)(2m+1)x^{2m}\frac{\sin(nx)}{n^2}\dif x\\
			&=\frac{\pi^{2m+1}(-1)^n}{n}-\frac{(2m+2)(2m+1)}{n^2}\left[\sum_{l=0}^{m}\frac{\pi^{2m-2l-1}(-1)^{n+l}(2m)!}{n^{2l+1}(2m-2l)!}-\frac{(-1)^m(2m)!}{\pi n^{2m+1}}\right]\\
			&=\sum_{l=0}^{m+1}\frac{\pi^{2(m+1)-2l-1}(-1)^{n+l}(2(m+1))!}{n^{2l+1}(2(m+1)-2l)!}-\frac{(-1)^{m+1}(2(m+1))!}{\pi n^{2(m+1)+1}},
		\end{align*}
		which concludes (\ref{05}).
		Therefore, by Theorem \ref{thm01}, Fubini's theorem, (\ref{04}) and (\ref{05}),
		\begin{align*}
			f(\pi/2)=0&=\frac{a_0}{2}+\sum_{n=1}^{\infty}a_n\cos(n\pi/2)+\sum_{n=1}^{\infty}b_n\sin(n\pi/2)\\
			&=\frac{\pi^{2k}}{4k+2}+\sum_{n=1}^{\infty}a_{2n}\cos(n\pi)+\sum_{n=1}^{\infty}b_{2n-1}\sin\left(\left(n-\frac12\right)\pi\right)\\
			&=\frac{\pi^{2k}}{4k+2}+\sum_{l=0}^{k-1}\sum_{n=1}^{\infty}\pi^{2k-2l-2}\frac{(2k)!(-1)^{3n+l}}{(2k-2l-1)!(2n)^{2l+2}}\\
			&\quad +\sum_{l=0}^{k}\sum_{n=1}^{\infty}\frac{\pi^{2k-2l-1}(-1)^{3n-2+l}(2k)!}{(2n-1)^{2l+1}(2k-2l)!}-\sum_{n=1}^{\infty}\frac{(-1)^{k+n-1}(2k)!}{\pi (2n-1)^{2k+1}}\\
			&=\frac{1}{4k+2}\pi^{2k}+\sum_{l=0}^{k-1}\sum_{n=1}^{\infty}\pi^{2k-2l-2}\frac{(2k)!(-1)^n(-1)^l}{(2k-2l-1)!(2n)^{2l+2}}\\
			&\quad+\sum_{l=0}^{k}\sum_{n=1}^{\infty}\frac{\pi^{2k-2l-1}(-1)^{n}(-1)^l(2k)!}{(2n-1)^{2l+1}(2k-2l)!}+\sum_{n=1}^{\infty}\frac{(-1)^k(-1)^n(2k)!}{\pi(2n-1)^{2k+1}},
		\end{align*}
		which implies (\ref{02}) immediately. The proof is complete.
	\end{proof}
	
	%{\bf Acknowledgements.} The authors are very grateful to the anonymous referees for
	%carefully reading the original
	%manuscript and giving valuable comments.


\begin{thebibliography}{123}
		\bibitem{Chen2000}J. X. CHEN, C. H. YU, L. JIN.  {\it Mathematical analysis [in Chinese]}. Higher Education Press, Beijing, 2000.
		\bibitem{Rudin1976}W. RUDIN.  {\it Principle of Mathematical Analysis}. McGraw-Hill, Inc., New York, 1976.
		\bibitem{Choe1987}B. R. CHOE.  {\it An elementary proof of $\sum_{n=1}^{\infty}=\pi^2/6$}.  The Amer. Math. Monthly, 1987, {\bf94}(7): 662--663.
		\bibitem{Ewell1992}J. A. EWELL. {\it An Eulerian method for representing $\pi^2$ by series}. Rocky Mt. J. Math., 1992,  {\bf11}(1): 165--168.
		\bibitem{Gupta2017}H.C.GUPTA. {\it New series representations of $\pi$, $\pi^3$ and $\pi^5$ in terms of Euler numbers and $\pi^2$, $\pi^4$ and $\pi^6$ in terms of Bernoulli numbers}. Preprint, 2017, arXiv: 1710/1710.0408.
		\bibitem{Borwein2000}P. BORWEIN.  {\it The amazing number $\pi$}.  Nieuw Archief voor Wiskunde. Vijfde Serie, 2000, {\bf1}(3): 254--258.
		\bibitem{Alzer2006}H. ALZER, D. KARAYANNAKIS, H. M. SRIVASTAVA.  {\it Series representations for some mathematical constants}. J. Math. Anal. Appl., 2006, {\bf320}(1): 145--162.
		
		
		
	\end{thebibliography}
\end{document}